\documentclass[11pt]{article}

\usepackage[T1]{fontenc}
\usepackage{lmodern}
\usepackage{microtype}
\usepackage[a4paper,margin=30mm]{geometry}
\usepackage{amsmath,amssymb,amsthm,mathtools}
\usepackage{hyperref, comment}

\hypersetup{
  pdftitle={Rank-One Convexity Implies Quasiconvexity for the Dacorogna–Marcellini Energy},
  pdfauthor={},
  hidelinks
}

\allowdisplaybreaks
\numberwithin{equation}{section}

\newtheorem{theorem}{Theorem}
\newtheorem{lemma}[theorem]{Lemma}

\newcommand{\R}{\mathbb R}
\newcommand{\M}{\R^{2\times2}}
\newcommand{\dd}{\mathrm d}
\newcommand{\cof}{\operatorname{cof}}
\newcommand{\diag}{\operatorname{diag}}

\title{Quasiconvexity for the Dacorogna–Marcellini Energy}
\author{Giuseppe Bruno\textsuperscript{1} and
Federico Pasqualotto\textsuperscript{2}\\[0.5em]
\small\textsuperscript{1}University of Bern and Google DeepMind\\
\small\textsuperscript{2}University of California San Diego and Google DeepMind}
\date{\today}

\begin{document}

\maketitle

\begin{abstract}
We prove that the planar Dacorogna--Marcellini energy
\(f_\gamma(A)=|A|^4-2\gamma|A|^2\det A\) is quasiconvex exactly when
it is rank-one convex, i.e. if and only if
 $|\gamma|\leq\frac{2}{\sqrt3}$.
The proof uses a monotonicity property of the energy functional along the componentwise heat flow. As a corollary of our method, we show that for homogeneous quartic polynomials on $2 \times 2$ matrices invariant by left and right rotation, quasiconvexity is equivalent to rank one convexity.
\end{abstract}

\section{Introduction}

Let \(\Omega\subset\R^n\) be a bounded domain and consider a
first-order variational integral
\[
 \mathcal I_f(u)=\int_\Omega f(Du(x))\,\dd x,
 \qquad
 u:\Omega\to\R^m,
\]
where \(f:\R^{m\times n}\to\R\) is continuous.  In the direct method
of the calculus of variations, weak lower semicontinuity of
\(\mathcal I_f\) allows one to pass to the weak limit of a minimizing
sequence and is therefore a fundamental property in existence results
\cite{Dacorogna2008}.
For scalar problems, ordinary convexity gives the relevant condition,
whereas for vector-valued maps the situation is more difficult.

Morrey introduced quasiconvexity in his study of this question
\cite{Morrey1952}.  An integrand
\(f:\R^{m\times n}\to\R\) is \emph{quasiconvex} if
\begin{equation}\label{eq:quasiconvexity}
 \int_U f(F+D\varphi)\,\dd x
 \geq |U|f(F)
\end{equation}
for every bounded Lipschitz domain \(U\subset\R^n\), every
\(F\in\R^{m\times n}\), and every
\(\varphi\in W^{1,\infty}_0(U,\R^m)\).   When \(m=1\) or \(n=1\),
this condition is equivalent to convexity, whereas for \(m,n\geq2\)
it is much harder to check.

Since the definition is global in the perturbation \(\varphi\),
quasiconvexity is difficult to verify directly.  Rank-one convexity
provides a simpler necessary condition: the integrand \(f\) is
\emph{rank-one convex} if
\[
 f\bigl((1-t)A+tB\bigr)
 \leq (1-t)f(A)+tf(B)
\]
whenever \(t\in[0,1]\), \(A,B\in\R^{m\times n}\), and
\(\operatorname{rank}(A-B)\leq1\).  For a twice differentiable
integrand, this is equivalent to the Legendre--Hadamard inequalities
\begin{equation}\label{eq:legendre-hadamard}
 D^2f(A)[a\otimes b,a\otimes b]\geq0,
 \qquad
 a\in\R^m,\quad b\in\R^n.
\end{equation}
Every quasiconvex integrand is rank-one convex, and the question of
whether the converse holds is known as Morrey's problem.

Polyconvexity provides a useful sufficient condition: an integrand is
\emph{polyconvex} if it is a convex function of all the minors of its
matrix argument.  In dimension \(2\times2\), this means that
\[
 f(A)=g(A,\det A)
\]
for some convex function \(g:\R^{2\times2}\times\R\to\R\), a
condition introduced by Ball in nonlinear elasticity
\cite{Ball1977}.  Together, these notions satisfy
\[
 \text{convex}
 \quad\Longrightarrow\quad
 \text{polyconvex}
 \quad\Longrightarrow\quad
 \text{quasiconvex}
 \quad\Longrightarrow\quad
 \text{rank-one convex}.
\]
Although none of the reverse implications holds in full generality,
the last one is the most relevant here.  {\v S}ver\'ak constructed a
rank-one-convex integrand on
\(\R^{3\times2}\) which is not quasiconvex \cite{Sverak1992}.  This
settled the question in higher target dimension, but the construction
does not descend to \(\R^{2\times2}\), which became the basic testing
ground for the gap between rank-one convexity and quasiconvexity.

The planar problem has also been studied by imposing additional
structure on the matrices or on the integrand.  M\"uller proved that
rank-one convexity implies quasiconvexity on diagonal matrices
\cite{Muller1999}, and Harris, Kirchheim, and Lin extended this result
to upper-triangular \(2\times2\) matrices
\cite{HarrisKirchheimLin2018}.  Sz\'ekelyhidi described the geometry of
rank-one-convex hulls in \(\R^{2\times2}\) and of quasiconvex hulls in
the symmetric subspace \cite{Szekelyhidi2005,Szekelyhidi2006}.
Faraco and Sz\'ekelyhidi proved a localization theorem for quasiconvex
hulls in \(\R^{2\times2}\), while Kirchheim and Sz\'ekelyhidi obtained
a characterization of incompatible sets of planar gradients
\cite{FaracoSzekelyhidi2008,KirchheimSzekelyhidi2008}.  For
rotationally invariant integrands, Dacorogna and Koshigoe showed that convexity and
polyconvexity can be tested on diagonal matrices, although the same
reduction is not valid for rank-one convexity and quasiconvexity
\cite{DacorognaKoshigoe1993}.  A stronger conclusion holds in the
isochoric class from nonlinear elasticity, where Martin, Ghiba, and
Neff proved that every objective and isotropic rank-one-convex energy
on \(\mathrm{GL}^+(2)\) is polyconvex \cite{MartinGhibaNeff2017}.

Connections with quasiconformal mapping theory are organized around
the Burkholder integrands.  Astala, Iwaniec, Prause, and Saksman proved
sharp quasiconcavity inequalities for these integrands under a
pointwise distortion restriction \cite{AstalaEtAl2012}.  Guerra and
Kristensen later obtained automatic polyconvexity and conditional
quasiconvexity results for nonnegative homogeneous isotropic
rank-one-convex integrands \cite{GuerraKristensen2022}.  More recently,
Astala, Faraco, Guerra, Koski, and Kristensen proved quasiconvexity of
the local Burkholder functional and obtained quasiconvex energies on
the set of matrices with positive determinant which are not
polyconvex \cite{AstalaEtAl2023}.

Finite elasticity provides another planar candidate family.  For
objective and isotropic energies with an additive
volumetric--isochoric split, Voss, Martin, Ghiba, and Neff reduced part
of Morrey's problem to the quasiconvexity of the rank-one-convex energy
\(W_{\mathrm{magic}}^+\) \cite{VossEtAl2022a}.  Numerical tests based on
finite elements, derivative-free optimization, and neural networks
did not find a violation of quasiconvexity for this energy
\cite{VossEtAl2022b}.

The Dacorogna--Marcellini family was introduced to study this planar
gap.  For \(\gamma\in\R\), it is given by
\begin{equation}\label{eq:def-f}
 f_\gamma(A)=|A|^4-2\gamma|A|^2\det A,
 \qquad A\in\M,
\end{equation}
where \(|A|\) is the Frobenius norm.  Dacorogna and Marcellini
introduced this quartic family in \cite{DacorognaMarcellini1988}.  Its
local convexity conditions can be computed exactly, while part of the
family lies beyond the polyconvex range, which makes it a useful model
for the planar problem.

The analysis of Dacorogna, Douchet, Gangbo, and Rappaz and of Alibert
and Dacorogna \cite{DacorognaEtAl1990,AlibertDacorogna1992} showed that
\[
 \begin{aligned}
  f_\gamma\text{ is convex}
  &\quad\Longleftrightarrow\quad
  |\gamma|\leq\frac{2\sqrt2}{3},\\
  f_\gamma\text{ is polyconvex}
  &\quad\Longleftrightarrow\quad
  |\gamma|\leq1,\\
  f_\gamma\text{ is rank-one convex}
  &\quad\Longleftrightarrow\quad
 |\gamma|\leq\frac{2}{\sqrt3}.
 \end{aligned}
\]
The endpoint \(|\gamma|=2/\sqrt3\) is rank-one convex but not
polyconvex, while polyconvexity proves quasiconvexity only for
\(|\gamma|\leq1\).  This leaves the interval
\[
 1<|\gamma|\leq\frac{2}{\sqrt3}
\]
open.  Failure of quasiconvexity in this interval would provide a
counterexample to the planar implication, while validity up to the
endpoint would make the quasiconvexity and rank-one-convexity
thresholds coincide for the whole family.

The first investigations were numerical, and Dacorogna, Douchet,
Gangbo, and Rappaz found evidence for quasiconvexity at the endpoint
\cite{DacorognaEtAl1990}.  Alibert and Dacorogna then proved that there
exists
\(\varepsilon>0\) such that
\[
 |\gamma|\leq1+\varepsilon
 \quad\Longrightarrow\quad
 f_\gamma\text{ is quasiconvex},
\]
and hence obtained an explicit quasiconvex integrand which is not
polyconvex \cite{AlibertDacorogna1992}.  Their argument crossed the
polyconvex threshold but did not reach the full rank-one-convexity
range, so the remaining gap continued to be studied numerically.
Gremaud used a simulated-annealing method, while Dacorogna and
Haeberly compared several numerical schemes, and both studies found
evidence that quasiconvexity persists throughout the rank-one-convex
range \cite{Gremaud1995,DacorognaHaeberly1998}.  Guti\'errez and
Villavicencio studied a broader optimization problem for quartic
polynomials without finding a planar counterexample
\cite{GutierrezVillavicencio2007}.  Dong and Enakoutsa later reported
numerical violations for the Dacorogna--Marcellini family
\cite{DongEnakoutsa2022}.  The different outcomes did not determine
the quasiconvexity threshold.

\begin{theorem}\label{thm:main}
For \(|\gamma|\leq2/\sqrt3\), the function \(f_\gamma\) defined in
\eqref{eq:def-f} is quasiconvex.
\end{theorem}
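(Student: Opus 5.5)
We follow the hint in the abstract and prove quasiconvexity through a monotonicity property of the energy along the componentwise heat flow.

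By homogeneity and a standard density argument, it suffices to show
\[
 \int_{\mathbb T^2} f_\gamma(F+D\varphi)\,\dd x\ \geq\ f_\gamma(F)
\]
for every \(F\in\M\) and every smooth periodic \(\varphi:\mathbb T^2\to\R^2\). Periodic test functions are equivalent to \(W^{1,\infty}_0\) test functions on any Lipschitz domain, by rescaling and covering. By continuity in \(\gamma\), and since \(f_{-\gamma}(A)=f_\gamma(AJ)\) with \(J\) a reflection, we may also assume \(1<\gamma\leq 2/\sqrt3\).

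Set \(u(t)=Fx+e^{t\Delta}\varphi\), where \(e^{t\Delta}\) is the heat semigroup acting on each component. Then \(Du(t)\to F\) uniformly as \(t\to\infty\). Hence it suffices to show that
\[
 E(t)=\int_{\mathbb T^2} f_\gamma(Du(t))\,\dd x
\]
is nonincreasing.

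Write \(A=Du\). Differentiating and integrating by parts gives
\[
 E'(t)=\int Df_\gamma(A):D\Delta u=-\sum_{k=1}^2\int D^2f_\gamma(A)[\partial_kA,\partial_kA]\,\dd x,
\]
so we need
\[
 \sum_k\int D^2f_\gamma(A)[\partial_kA,\partial_kA]\geq0 .
\]
Each \(\partial_kA=D(\partial_k u)\) is itself a gradient. In the constant-coefficient case this is exactly where Legendre–Hadamard gives Gårding. Here the coefficients vary with \(A\), so we exploit the quartic structure instead. The form \(D^2f_\gamma(A)[H,H]\) is a quadratic polynomial in \(A\) and quadratic in \(H\). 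We will add null-Lagrangian corrections: expressions of the form \(\int \Phi(A)\det(\partial_k A)\) and \(\int \partial_k(\cdots)\), together with identities obtained by integrating by parts between \(\partial_1A\) and \(\partial_2A\). These use the compatibility conditions \(\partial_1(\partial_2 u)=\partial_2(\partial_1u)\), and they let us reduce the integrand to a pointwise nonnegative form.

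Concretely, use the conformal–anticonformal splitting \(A=A^++A^-\). Here \(|A|^2=|a_+|^2+|a_-|^2\) and \(\det A=\tfrac12(|a_+|^2-|a_-|^2)\) (suitably normalized), and \(D\) becomes \(\partial_{\bar z}\)-type derivatives in complex notation. In these coordinates \(f_\gamma\) is a polynomial in \(p=|a_+|^2\) and \(q=|a_-|^2\). The Hessian integrand becomes a Hermitian form in the complex quantities \(\partial_z a_\pm\) and \(\partial_{\bar z}a_\pm\), with coefficients depending on \(a_\pm\).

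The relation \(\partial_{\bar z}a_+ = \overline{\partial_z a_-}\)-type compatibility (from \(u\) being a map) links these derivatives. Integrating by parts also shows that \(\int \overline{a}\,\partial a\cdots\) cross terms can be exchanged. We then look for free parameters (multiples of null Lagrangians) that make the resulting pointwise form positive semidefinite for all \(a_\pm\). Rotation invariance reduces this to a finite-dimensional matrix inequality in the two parameters \(|a_+|,|a_-|\) plus one angle.

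The main obstacle is this last step: showing that a choice of null-Lagrangian multipliers exists that makes the form pointwise nonnegative exactly up to \(\gamma=2/\sqrt3\), rather than only up to some smaller threshold. The approach is to first treat the extremal configurations where rank-one convexity is sharp. This determines the multipliers by tangency conditions. Then verify positivity of the resulting \(4\times4\) (real \(8\times8\)) matrix via its principal minors, which are polynomial inequalities in \(t=|a_-|^2/|a_+|^2\) and \(\gamma\). This verification may require a case split in \(t\) and a sum-of-squares certificate.

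The same computation, carried out for a general rotation-invariant quartic, would yield the corollary.
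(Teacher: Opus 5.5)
The heat-flow reduction is sound and matches the paper's architecture. The periodic version of the argument is a legitimate substitute for Lemma~\ref{lem:heat-flow}: on \(\mathbb T^2\) one has \(E(t)\to f_\gamma(F)\) directly, so the paper's relative energy and heat-kernel decay estimates are unnecessary, and exact forms still integrate to zero by Stokes. Correcting the density \(\sum_kD^2f_\gamma(A)[\partial_kA,\partial_kA]\) by a null Lagrangian until it is pointwise nonnegative is exactly the paper's idea.

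\textbf{The gap.} You stop at the one step that carries the content. You never produce a correction, and you give no argument that one exists all the way to \(\gamma=2/\sqrt3\). Appealing to tangency conditions, a possible case split and an SOS certificate is a search strategy, not a proof. The integrated Hessian inequality is a sufficient condition at least as strong as quasiconvexity, so nothing makes it automatic. As written, the proposal does not establish quasiconvexity for any \(\gamma\) with \(1<|\gamma|\le2/\sqrt3\).

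\textbf{A secondary error.} Your list of admissible corrections is partly wrong: \(\int\Phi(A)\det(\partial_kA)\,\dd x\) is not a null Lagrangian for general \(\Phi\). Since \(\partial_1A_{i2}=\partial_2A_{i1}\), \(\det(\partial_1A)\) is the pullback of \(\dd A_{11}\wedge\dd A_{21}\). The form \(\Phi\,\dd A_{11}\wedge\dd A_{21}\) is not closed unless \(\Phi\) depends only on \((A_{11},A_{21})\). For instance, \(\Phi=A_{12}\) and \(u=(\sin x_1\sin x_2,\tfrac12\sin 2x_1)\) on \((\R/2\pi\mathbb Z)^2\) give the value \(\pi^2\). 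The safe class is \(\omega_A(\partial_1A,\partial_2A)\) with \(\omega=\dd\alpha\) exact on \(\M\).

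\textbf{What the paper supplies.} It gives an explicit invariant certificate, built as follows.
\begin{itemize}
\item Since \(f_\gamma\) is affine in \(\gamma\) and \(|A|^4\) is convex, it suffices to treat \(\gamma=2/\sqrt3\) and scale the correction.
\item One takes \(\alpha\) cubic and \(SO(2)\times SO(2)\)-invariant, built from \(|A|^2\), \(\det A\) and the two invariant linear one-forms \(\theta_A(B)=\operatorname{tr}(JAB^T)\) and \(\zeta_A(B)=\operatorname{tr}(AJ^TB^T)\).
\item Invariance of the correction is what permits the reduction to \(A=\diag(1,\rho)\) with no residual angle, since both phases of \(a_\pm\) can be removed. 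There are only six independent second derivatives of \(u\), not a \(4\times4\) complex system.
\item The corrected form splits into two \(3\times3\) blocks, in \((\partial_{11}u^1,\partial_{22}u^1,\partial_{12}u^2)\) and \((\partial_{22}u^2,\partial_{11}u^2,\partial_{12}u^1)\). The two blocks are exchanged by the covariance with \(O=U=J\).
\item Two conditions fix the coefficients: the corrected \((\partial_{12}u^2)^2\) coefficient must equal the \((\partial_{22}u^1)^2\) coefficient, and the \(\partial_{11}u^1\,\partial_{12}u^2\) cross term must vanish at the Legendre--Hadamard degeneracy \(\rho=\sqrt3\) (your tangency condition). Together they force
\[
 \alpha_+=(\sqrt3|A|^2+2\det A)\theta+(3|A|^2-2\sqrt3\det A)\zeta .
\]
\item The principal minors of the first block are then \((\rho-\sqrt3)^2\), \(\tfrac34(\rho-\sqrt3)^2[(\rho-\tfrac{\sqrt3}{9})^2+\tfrac8{27}]\) and \(\tfrac16(\rho-\sqrt3)^2(1-\rho^2)^2\). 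All are nonnegative, so no case split or SOS certificate is needed.
\end{itemize}
Without this construction, or an equivalent explicit certificate, your argument is a plan rather than a proof.
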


Since quasiconvexity implies rank-one convexity, the known obstruction
beyond \(2/\sqrt3\) is also necessary.  Thus
\[
 f_\gamma\text{ is quasiconvex}
 \quad\Longleftrightarrow\quad
 f_\gamma\text{ is rank-one convex}
 \quad\Longleftrightarrow\quad
 |\gamma|\leq\frac2{\sqrt3}.
\]
In particular, the whole interval
\(1<|\gamma|\leq2/\sqrt3\) consists of quasiconvex integrands which are
not polyconvex.

Our method also applied to more general invariant quartic homogeneous polynomials.

\begin{theorem}\label{cor:invariant-quartics}
Let \(Q:\M\to\R\) be a homogeneous polynomial of degree 4 such that
\[
 Q(UFV^T)=Q(F)
 \qquad
 \text{for every }F\in\M\text{ and }U,V\in SO(2).
\]
Then \(Q\) is quasiconvex if and only if it is rank-one convex.
\end{theorem}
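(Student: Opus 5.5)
The first step is to reduce the statement to an explicit finite-dimensional family. I would use complex notation for \(\M\): identify \(F\) with the pair \((a,b)\in\mathbb C^2\) such that \(Fz=az+b\bar z\). Then \(|F|^2=2(|a|^2+|b|^2)\) and \(\det F=|a|^2-|b|^2\). Left and right multiplication by rotations act by \(a\mapsto e^{i(\theta+\phi)}a\) and \(b\mapsto e^{i(\theta-\phi)}b\). Expand a homogeneous quartic \(Q\) in the monomials \(a^j\bar a^k b^l\bar b^m\). Averaging over \(\theta\) and \(\phi\) kills every monomial except those with \(j=k\) and \(l=m\). Hence
\[
 Q(F)=c_1|F|^4+c_2|F|^2\det F+c_3(\det F)^2
\]
for real constants \(c_1,c_2,c_3\). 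Quasiconvexity always implies rank-one convexity, so only the converse needs proof, and it becomes a statement about this three-parameter cone.

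Next I would dispose of the degenerate cases. Along the ray \(t\mapsto t\,a\otimes b\) the determinant vanishes, so \(Q(t\,a\otimes b)=c_1t^4|a|^4|b|^4\), and rank-one convexity forces \(c_1\ge0\). Suppose \(c_1=0\). Testing the Legendre--Hadamard inequalities \eqref{eq:legendre-hadamard} at \(F=\pm I\) and at rank-one matrices should force \(c_2=0\). Then \(Q=c_3\det^2\), and rank-one convexity gives \(c_3\ge0\): on a rank-one line \(\det\) is affine with generically nonzero slope, so \(-\det^2\) would be strictly concave there. Such a \(Q\) is a convex function of \(\det\), hence polyconvex and therefore quasiconvex.

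In the main case \(c_1>0\) I normalize \(c_1=1\) and write \(Q=f_\gamma+c\det^2\) with \(\gamma=-c_2/2\). I would then compute the Legendre--Hadamard region \(R\subset\{(\gamma,c)\}\) explicitly. By invariance it suffices to take \(F=\diag(s_1,s_2)\) and rank-one directions \(a\otimes b\) with \(a,b\) unit vectors parametrised by two angles. Homogeneity then reduces the condition to nonnegativity of a trigonometric polynomial, and its boundary can be computed by hand. The set \(R\) is convex, and the goal is to cover it using three facts:
\begin{itemize}
 \item sums of quasiconvex functions are quasiconvex;
 \item \(\det^2\) and the polyconvex members (\(|\gamma|\le1\)) are quasiconvex;
 \item Theorem~\ref{thm:main} covers \(f_\gamma\) for \(|\gamma|\le2/\sqrt3\).
\end{itemize}
Thus every point of \(R\) of the form \(\lambda f_{\gamma'}+\mu\det^2+P\), with \(\lambda,\mu\ge0\), \(|\gamma'|\le2/\sqrt3\) and \(P\) polyconvex, is handled. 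This settles all of \(R\) with \(c\ge0\), and also the part of \(c<0\) where the negative \(\det^2\) can be absorbed into a polyconvex combination. An example of the latter is \(|F|^4-2|F|^2\det F+\det^2\cdot(\cdot)\), using \(|F|^2\ge2|\det F|\).

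The main obstacle is the remaining part of \(R\) with \(c<0\). There the cone decomposition may fail, since subtracting \(\det^2\) genuinely lowers \(Q\) and such points need not lie in the cone generated by the known quasiconvex pieces. For these I would rerun the heat-flow argument of Theorem~\ref{thm:main} directly for \(Q\): evolve \(u=Fx+\varphi\) by the componentwise heat equation and differentiate \(\int Q(Du)\) in time. After integrating by parts, the derivative should be a quadratic form in \(D^2u\) with coefficients quadratic in \(Du\). For \(Q\) in this three-parameter family, the nonnegativity of that form should reduce, modulo null-Lagrangian terms, to exactly the Legendre--Hadamard condition that defines \(R\). Establishing that reduction for general \((\gamma,c)\) is the crux. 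I would attempt it by checking the pointwise inequality at diagonal \(Du\) after conjugating by rotations, using the same algebraic certificate (a sum of squares plus a multiple of a null Lagrangian) that worked for \(f_\gamma\), now with \(c\)-dependent coefficients.
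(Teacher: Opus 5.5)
Your reduction to \(Q=c_1|F|^4+c_2|F|^2\det F+c_3(\det F)^2\) and your treatment of \(c_1=0\) are fine. The gap is in the covering step, which fails exactly where the theorem has content, and your last paragraph does not fill it. Testing the direction \(E_{11}\) shows that, in your normalization \(Q=f_\gamma+c\det^2\), the Legendre--Hadamard region is
\[
 R=\bigl\{(\gamma,c):\ |\gamma|\le2,\ c\ge\tfrac32\gamma^2-2\bigr\}.
\]
Its lower boundary is a strictly convex parabola, so every point on it is an extreme point of \(R\). After normalizing leading coefficients, all your building blocks lie in \(R\), and adding \(\det^2\) only moves points upward. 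Hence a decomposition \(\lambda f_{\gamma'}+\mu\det^2+P\) of a parabola point forces \(\mu=0\), and forces the point itself to be either \(f_{\pm2/\sqrt3}\) or polyconvex. Polyconvexity is a closed condition and \(f_{\pm2/\sqrt3}\) is not polyconvex. So a whole arc of the parabola on both sides of \(\gamma=\pm2/\sqrt3\) is out of reach of your decomposition. That arc contains points with \(c>0\), namely those with \(2/\sqrt3<|\gamma|<2\), so your claim that \(R\cap\{c\ge0\}\) is settled is false. Every parabola point must be proved quasiconvex directly.

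For this you only say that the heat-flow density ``should reduce, modulo null-Lagrangian terms'' to the Legendre--Hadamard condition, and that you would look for a certificate. That certificate is the missing idea, and it is the whole proof. Your last paragraph points in the direction the paper takes, but it is not carried out. The paper first writes \(Q=c_1W_t+\mu d^2\), where \(s=|F|^2\), \(d=\det F\), and
\[
 W_t=s^2+4tsd+(6t^2-2)d^2,\qquad t=\frac{c_2}{4c_1}\in[-1,1],\qquad \mu=c_3+2c_1-\frac{3c_2^2}{8c_1}\ge0 .
\]
Then only the one-parameter boundary family needs work, and no \(c\)-dependent certificate is required. For it, the invariant cubic primitive
\[
 \alpha_t=[-3ts+6(1-2t^2)d]\,\theta+[3s+6td]\,\zeta
\]
turns the corrected density at \(\diag(1,\rho)\) into an explicit sum of squares in \(L=1+t\rho\), \(M=t+\rho\), \(\delta=1-t^2\). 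It is nonnegative precisely because \(|t|\le1\). At \(t=-1/\sqrt3\), i.e.\ \(\gamma=2/\sqrt3\), it reduces to \(\alpha_+\) from Lemma~\ref{lem:pointwise}. Without an explicit \(t\)-dependent exact form of this kind, nothing in your proposal proves quasiconvexity of the non-polyconvex \(W_t\) with \(t\neq\pm1/\sqrt3\).
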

The proof of Theorem \ref{cor:invariant-quartics} proceeds in the same way as the proof of Theorem \ref{thm:main}, once one observes that every such polynomial admits a unique representation
\[
 Q(F)=a|F|^4+b|F|^2\det F+c(\det F)^2.
\]
for $a,b,c\in\R$.

\paragraph{Proof strategy.}
The proof uses a monotonicity argument rather than a direct estimate of
\eqref{eq:quasiconvexity}.  Given a compactly supported perturbation
\(\varphi\), we evolve it by the heat equation
\[
 \varphi_\tau=e^{\tau\Delta}\varphi.
\]
As \(\tau\to\infty\), the perturbation disappears, and differentiation
of the relative energy along this path reduces the proof to the
integrated second-variation inequality
\[
 \int_{\R^2}\sum_{i=1}^2
 D^2f_\gamma(Dv)[\partial_iDv,\partial_iDv],\dd x\geq0.
\]
This step is related to the flow-interchange method of Matthes, McCann,
and Savar\'e \cite{MatthesMcCannSavare2009}, since the functional is
studied along an auxiliary flow with simple long-time behavior.

Since the second-variation density need not be pointwise nonnegative,
we construct an
\(SO(2)\times SO(2)\)-invariant exact two-form
\(\omega_\gamma=\dd\alpha_\gamma\) on matrix space.  Its pullback by a
gradient has zero integral,
\[
 \int_{\R^2}(Dv)^*\omega_\gamma=0.
\]
  We choose \(\omega_\gamma\) so that adding it to
the second-variation density makes the resulting expression
pointwise nonnegative, thereby changing the density without changing
its integral.

The rotational invariance of both the energy and the correction
reduces the pointwise inequality to matrices of the form
\(\operatorname{diag}(1,\rho)\).  At the endpoint
\(|\gamma|=2/\sqrt3\), it remains to prove that an explicit quadratic
form is nonnegative, while the intermediate values of \(\gamma\) follow
by interpolation.  The next section develops the heat-flow
criterion, and the final section constructs the exact correction and
verifies the pointwise inequality.

\paragraph{Statement on AI use}
We acknowledge the use of AI tools in both devising important ideas for this paper, and in the writing of the manuscript. In particular, the exact two-form used in the proof was found in an autonomous way by an AI assisted method using an agentic system, Gemini / Aletheia2.

\paragraph{Acknowledgements}
The authors acknowledge support from Thang Luong and Garrett Bingham (Google DeepMind) for providing access to the models used in the proof. We also acknowledge Andr\'e Guerra for comments on a preliminary version of the manuscript.

\section{Monotonicity under heat flow}

To study the functional generated by \(f_\gamma\), we use a strategy
related to the flow-interchange technique of Matthes, McCann, and
Savar\'e from Wasserstein gradient flow theory
\cite{MatthesMcCannSavare2009}.  The idea is to compute the evolution of a functional
along an auxiliary flow rather than along its own gradient flow. Here, the auxiliary flow is the heat flow.

For \(A,B_1,B_2\in\M\), define
\begin{equation*}
 \mathcal E_\gamma(A; B_1,B_2)
 =\sum_{i=1}^2D^2f_\gamma(A)[B_i,B_i].
\end{equation*}

\begin{lemma}\label{lem:heat-flow}
Fix \(\gamma\in\R\).  Suppose that
\begin{equation}\label{eq:integrated-hessian}
 \int_{\R^2}\mathcal E_\gamma
 \bigl(Dv;\partial_1Dv,\partial_2Dv\bigr)\,\dd x\geq0
\end{equation}
whenever \(v(x)=Fx+\psi(x)\), with \(F\in\M\) and
\(\psi\) in the Schwartz class.  Then \(f_\gamma\) is quasiconvex.
\end{lemma}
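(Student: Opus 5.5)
Given \(F\in\M\) and a test function \(\varphi\), the plan is to flow \(\varphi\) by the heat equation and show that the relative energy decreases to zero. First, quasiconvexity may be tested on a single domain, say the unit square or a ball \(U\), and with \(\varphi\in C_c^\infty(U,\R^2)\). This follows from a standard density argument: \(f_\gamma\) is a quartic polynomial, so \(|f_\gamma(A)|\le C(1+|A|^4)\), and Lipschitz \(\varphi\) can be approximated by smooth compactly supported maps with uniformly bounded gradients converging a.e. Domain independence holds by the usual covering and rescaling argument. Extend \(\varphi\) by zero to \(\R^2\) and set \(\varphi_\tau=e^{\tau\Delta}\varphi\), \(v_\tau=Fx+\varphi_\tau\). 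Define
\[
 G(\tau)=\int_{\R^2}\bigl(f_\gamma(F+D\varphi_\tau)-f_\gamma(F)\bigr)\,\dd x .
\]
At \(\tau=0\) this equals \(\int_U f_\gamma(F+D\varphi)-|U|f_\gamma(F)\). The goal is to show \(G(\tau)\to0\) as \(\tau\to\infty\) and \(G'(\tau)\le0\); together these give \(G(0)\ge0\).

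\emph{Well-definedness and the limit.} Write \(f_\gamma(F+B)-f_\gamma(F)=Df_\gamma(F)[B]+R(F,B)\), where \(|R|\le C(|B|^2+|B|^4)\). The linear term integrates to zero, since \(\int D\varphi_\tau=0\): indeed \(D\varphi_\tau\) is a derivative of a Schwartz function. Heat-kernel bounds give \(\|D\varphi_\tau\|_{L^2}^2\lesssim \tau^{-2}\|\varphi\|_{L^1}^2\) in two dimensions, together with \(\|D\varphi_\tau\|_{L^\infty}\to0\). Hence \(\int R\to0\) and \(G(\tau)\to0\). The same bounds, using that \(\varphi_\tau\) is Schwartz for \(\tau>0\) and continuous in \(\tau\) at \(0\), justify that \(G\) is finite, continuous on \([0,\infty)\), and differentiable for \(\tau>0\). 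Continuity at \(\tau=0\) uses \(D\varphi_\tau\to D\varphi\) in \(L^2\cap L^4\).

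\emph{The derivative.} For \(\tau>0\), writing \(A=Dv_\tau\), we have
\[
 G'(\tau)=\int Df_\gamma(A)[D\Delta\varphi_\tau]\,\dd x
 =\sum_{i}\int Df_\gamma(A)[\partial_i\partial_i A]\,\dd x .
\]
Integrate by parts once in \(x_i\); boundary terms vanish because \(\partial_iA=\partial_iD\varphi_\tau\) is Schwartz and \(Df_\gamma(A)\) grows polynomially. This gives
\[
 G'(\tau)=-\sum_i\int D^2f_\gamma(A)[\partial_iA,\partial_iA]\,\dd x
 =-\int\mathcal E_\gamma(Dv_\tau;\partial_1Dv_\tau,\partial_2Dv_\tau)\,\dd x\le0,
\]
by \eqref{eq:integrated-hessian} applied with \(\psi=\varphi_\tau\), which is Schwartz. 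Therefore \(G(0)=G(\tau_0)-\int_0^{\tau_0}G'\ge G(\tau_0)\) for every \(\tau_0\); letting \(\tau_0\to\infty\) gives \(G(0)\ge0\), which is the quasiconvexity inequality.

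The main obstacle is the analytic bookkeeping rather than any idea. The key points are the decay \(G(\tau)\to0\), which needs the cancellation of the linear term and the dimension-two \(L^2\) decay of \(D\varphi_\tau\), and the justification of differentiation under the integral near \(\tau=0\). The latter I would avoid by working on \([\tau_0,\infty)\) and then letting \(\tau_0\to0\) using continuity.
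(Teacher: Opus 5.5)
Your proposal is correct and follows essentially the same route as the paper. You flow the perturbation by the heat semigroup and apply the hypothesis to the Schwartz maps \(Fx+\varphi_\tau\) to get monotonicity of the relative energy; you then send \(\tau\to\infty\) using heat-kernel decay and the vanishing mean of \(D\varphi_\tau\), and pass to \(\tau\to0\) and to Lipschitz perturbations by continuity and density. The differences are cosmetic: the paper subtracts the linear term inside the integrand and uses \(L^2,L^3,L^4\) decay bounds in terms of \(\|D\varphi\|_{L^1}\), and it works directly on an arbitrary bounded domain via extension by zero, so your preliminary reduction to a single domain is unnecessary.
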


\begin{proof}
Fix \(F\in\M\) and
\(\varphi\in C_c^\infty(\R^2,\R^2)\).  Let
\[
 \varphi_\tau=e^{\tau\Delta}\varphi,
 \qquad
 G_\tau=D\varphi_\tau,
 \qquad
 A_\tau=F+G_\tau.
\]
Define
\begin{equation*}
 \mathcal F(\tau)
 =\int_{\R^2}\bigl[
 f_\gamma(A_\tau)-f_\gamma(F)
 -\operatorname{tr}\bigl(Df_\gamma(F)G_\tau^T\bigr)
 \bigr],\dd x.
\end{equation*}
For \(\tau>0\), integration by parts and
\(\partial_\tau A_\tau=\Delta A_\tau\) give
\begin{equation*}
 \mathcal F'(\tau)
 =-\int_{\R^2}\mathcal E_\gamma
 \bigl(A_\tau;\partial_1A_\tau,\partial_2A_\tau\bigr)\,\dd x.
\end{equation*}
The hypothesis applied to \(v_\tau(x)=Fx+\varphi_\tau(x)\) shows that
\(\mathcal F\) is nonincreasing. The heat kernel estimate in two dimensions gives
\begin{equation}\label{eq:heat-estimate}
 \|G_\tau\|_{L^p}
 \leq C_p\tau^{-(1-1/p)}\|D\varphi\|_{L^1},
 \qquad p=2,3,4.
\end{equation}
Since \(f_\gamma\) is quartic,
\begin{align}
 \bigl|f_\gamma(F+G)-f_\gamma(F)
 -\operatorname{tr}\bigl(Df_\gamma(F)G^T\bigr)\bigr|
 \leq C_\gamma\bigl(
 |F|^2|G|^2+|F||G|^3+|G|^4\bigr).
 \label{eq:quartic-bound}
\end{align}
Equations \eqref{eq:heat-estimate} and \eqref{eq:quartic-bound} imply
that \(\mathcal F(\tau)\to0\) as \(\tau\to\infty\).

To justify the limit at zero, put
\[
 \mathcal R_F(G)
 =f_\gamma(F+G)-f_\gamma(F)
 -\operatorname{tr}\bigl(Df_\gamma(F)G^T\bigr).
\]
Since \(\mathcal R_F\) is the sum of terms of degrees two, three, and
four in \(G\),
\begin{align*}
 |\mathcal R_F(G)-\mathcal R_F(H)|
 \leq C_\gamma\bigl[&
 |F|^2(|G|+|H|)
 +|F|(|G|^2+|H|^2)
 \\
 &+|G|^3+|H|^3\bigr]|G-H|.
\end{align*}
Writing \(G_0=D\varphi\) and applying H\"older's inequality gives
\begin{align}
 |\mathcal F(\tau)-\mathcal F(0)|
 \leq C_\gamma\bigl[&
 |F|^2(\|G_\tau\|_{L^2}+\|G_0\|_{L^2})
       \|G_\tau-G_0\|_{L^2}
 \nonumber\\
 &+|F|(\|G_\tau\|_{L^3}^2+\|G_0\|_{L^3}^2)
       \|G_\tau-G_0\|_{L^3}
 \nonumber\\
 &+(\|G_\tau\|_{L^4}^3+\|G_0\|_{L^4}^3)
       \|G_\tau-G_0\|_{L^4}\bigr].
 \label{eq:relative-energy-continuity}
\end{align}
For \(p=2,3,4\), strong continuity of the heat semigroup gives
\[
 \|G_\tau-G_0\|_{L^p}
 =\|e^{\tau\Delta}G_0-G_0\|_{L^p}\longrightarrow0
 \qquad(\tau\to0).
\]
The remaining norms in \eqref{eq:relative-energy-continuity} are
bounded for small \(\tau\).  Hence
\(\mathcal F(\tau)\to\mathcal F(0)\) as \(\tau\to0\).
Monotonicity therefore implies \(\mathcal F(0)\geq0\).  Since
\(\int_{\R^2}D\varphi\,\dd x=0\), this is
\[
 \int_{\R^2}\bigl[f_\gamma(F+D\varphi)-f_\gamma(F)\bigr]\dd x
 \geq0.
\]
For \(\varphi\in W^{1,\infty}_0(\Omega,\R^2)\), choose
\(\varphi_j\in C_c^\infty(\Omega,\R^2)\) converging to \(\varphi\)
in \(W^{1,4}\).  The estimate
\[
 |f_\gamma(F+G)-f_\gamma(F+H)|
 \leq C_\gamma(|F|+|G|+|H|)^3|G-H|
\]
and H\"older's inequality allow passage to the limit.  This proves
\eqref{eq:quasiconvexity}.
\end{proof}

\section{The correction lemma}

Let \(\alpha\) be a polynomial one-form on \(\M\), and let
\(\omega=\dd\alpha\).  If \(v(x)=Fx+\psi(x)\), where \(\psi\) is in
the Schwartz class, then
\[
 (Dv)^*\omega=\dd\bigl((Dv)^*\alpha\bigr).
\]
Every coefficient of \((Dv)^*\alpha\) contains a derivative of
\(Dv\), so this one-form is rapidly decreasing.  Stokes' theorem gives
\begin{equation}\label{eq:exact-integral-zero}
 \int_{\R^2}\omega_{Dv}
 \bigl(\partial_1Dv,\partial_2Dv\bigr)\,\dd x=0.
\end{equation}
Fix \(|\gamma|\leq2/\sqrt3\).  To prove the sufficiency assertion in
Theorem~\ref{thm:main}, Lemma~\ref{lem:heat-flow} shows that it is
enough to prove \eqref{eq:integrated-hessian}.  In general, its
integrand
$
 \mathcal E_\gamma(Dv;\partial_1Dv,\partial_2Dv)
$
is not pointwise nonnegative.  We therefore seek an exact two-form
\(\omega_\gamma\) whose addition makes this expression pointwise
nonnegative.  If this form has a polynomial primitive, its integral
vanishes by \eqref{eq:exact-integral-zero}.  Integrating the corrected
inequality then gives \eqref{eq:integrated-hessian}, and
Lemma~\ref{lem:heat-flow} proves quasiconvexity.

\begin{lemma}\label{lem:pointwise}
If \(|\gamma|\leq2/\sqrt3\), there is an exact two-form
\(\omega_\gamma\) on \(\M\), with homogeneous quadratic coefficients
and invariant under
\[
 A\longmapsto OAU^T,
 \qquad O,U\in SO(2),
\]
such that
\begin{equation*}
 \mathcal E_\gamma(Dv;\partial_1Dv,\partial_2Dv)
 +(\omega_\gamma)_{Dv}(\partial_1Dv,\partial_2Dv)\geq0
\end{equation*}
for every \(v\in C^2(\Omega,\R^2)\).
\end{lemma}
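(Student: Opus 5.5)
The plan is to build the correction $\omega_\gamma$ from a finite list of invariant exact two-forms, to reduce the pointwise inequality to diagonal base points using the $SO(2)\times SO(2)$ symmetry, and to prove the resulting positivity at the endpoint $|\gamma|=2/\sqrt3$ first, recovering smaller $|\gamma|$ by interpolation.

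\textbf{Step 1: the candidate forms.} Write $A=Dv$, $B_i=\partial_iA$. The pair $(B_1,B_2)$ is not arbitrary: $\partial_1 B_2=\partial_2 B_1$ holds only at the level of derivatives. Pointwise, the compatibility is that the columns satisfy $B_1e_2=B_2e_1$, since both equal $\partial_1\partial_2 v$. So $(B_1,B_2)$ is described by three vectors $\partial_{11}v,\partial_{12}v,\partial_{22}v\in\R^2$, that is, six real numbers.

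Both terms in the inequality are quadratic in these six numbers with coefficients quadratic in $A$. The Hessian $D^2f_\gamma(A)$ is quadratic in $A$. A two-form with quadratic coefficients gives $\omega_A(B_1,B_2)$, which is bilinear in $(B_1,B_2)$ and antisymmetric.

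I would first parametrize every exact two-form with homogeneous quadratic coefficients that is invariant under $A\mapsto OAU^T$. One does this by taking invariant polynomial one-forms $\alpha$ of degree three, with coefficients built from $|A|^2$, $\det A$, $A$ and $\cof A$, and computing $\dd\alpha$. Natural generators are
\[
\dd\bigl(|A|^2\,\dd\det A\bigr),\qquad \dd\bigl(\det A\,\dd |A|^2\bigr),\qquad \dd\bigl(\langle \cdot\rangle\ \text{mixed terms with }A A^TA\bigr),
\]
together with the null-Lagrangian type forms $\sum$ (minor-type) terms. This yields a finite-dimensional family $\omega=\sum_k c_k\omega^{(k)}$.

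\textbf{Step 2: reduction to diagonal base points.} By the joint invariance of $f_\gamma$ and $\omega$, I replace $A$ by $OAU^T$. At the same time I transform $(B_1,B_2)$ by $B_i\mapsto OB_iU^T$ and rotate the index $i$ by $U$. This keeps the compatibility constraint, because it corresponds to the change of variables $v\mapsto Ov(U^Tx)$.

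Using the singular value decomposition and $2$-homogeneity in $A$, I may therefore take $A=\diag(1,\rho)$ with $|\rho|\le1$, and handle $A=0$ trivially. The claim then becomes: for each such $\rho$, a $6\times6$ symmetric matrix $M_\gamma(\rho;c)$ is positive semidefinite.

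\textbf{Step 3: the endpoint and interpolation.} The map $\gamma\mapsto\mathcal E_\gamma$ is affine in $\gamma$, and so is $\omega$ once I let $c$ depend affinely on $\gamma$. It therefore suffices to treat $\gamma=\pm2/\sqrt3$ and $\gamma=0$. At $\gamma=0$ the form $\omega=0$ works, because $|A|^4$ is convex. The case $-\gamma$ reduces to $\gamma$ by composing with a reflection, $A\mapsto A\,\diag(1,-1)$, which sends $\det A\mapsto-\det A$ and maps invariant exact forms to invariant exact forms. The conclusion then follows by convex combination.

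At $\gamma=2/\sqrt3$ I choose the coefficients $c$ so that $M(\rho;c)$ is positive semidefinite for all $\rho\in[-1,1]$. My approach is to first impose the necessary conditions at the degenerate directions. Rank-one convexity at the endpoint is sharp, so there are $\rho$ and rank-one pairs $B_i=a\otimes b_i$ where $\mathcal E$ vanishes to second order. The form must vanish there and must not destroy tangency, and these conditions fix most of $c$. I would then write $M(\rho)$ as an explicit sum of squares whose coefficients are nonnegative polynomials in $\rho$, for example via a Schur complement or an $LDL^T$ decomposition.

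\textbf{Main obstacle.} The main obstacle is this last step: finding $c$ and certifying positive semidefiniteness uniformly in $\rho$, including near $\rho=-1$, where the endpoint degeneracy lives. I would first try decoupling the $6\times6$ matrix into invariant blocks under the residual symmetry $\diag(\pm1,\pm1)$ that fixes $\diag(1,\rho)$, then check the $2\times2$ or $3\times3$ block minors as polynomials in $\rho$.
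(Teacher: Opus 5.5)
\textbf{There is a genuine gap.} Your reductions are sound and agree with the paper: an $SO(2)\times SO(2)$-invariant exact correction $\omega=\dd\alpha$ with cubic $\alpha$, reduction to $A_\rho=\diag(1,\rho)$, and interpolation between $\gamma=0$ (with $\omega=0$) and the endpoint, using a reflection for $\gamma<0$. But the lemma is an existence statement. Its whole content is to exhibit a form that works up to $|\gamma|=2/\sqrt3$ and to certify the resulting positivity, and you defer exactly this as the ``main obstacle''. As written, nothing beyond the reductions is proved.

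Moreover, the search space you describe would fail. Every generator you actually specify is built from $|A|^2$, $\det A$, $A$, $\cof A$, $AA^TA$. Each is therefore invariant under the improper map $T(A)=RAR$ with $R=\diag(1,-1)$, and so is its differential. The change of variables $\widetilde v(y)=Rv(Ry)$ fixes $A_\rho$ and acts by $(B_1,B_2)\mapsto(RB_1R,-RB_2R)$. This map fixes every pair in the block $(\xi_1,\xi_3,\eta_2)$ (i.e.\ $\xi_2=\eta_1=\eta_3=0$) and negates every pair in the complementary block. Hence any $T$-invariant $\omega$ vanishes on both blocks and only adds cross terms; for instance $(\dd|A|^2\wedge\dd\det A)_{A_\rho}(B_1,B_2)=2(1-\rho^2)(\xi_1\eta_3-\xi_2\eta_2)$. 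Yet the uncorrected density is negative inside a single block. For $v(x)=x+(x_1^2/2,\,x_1x_2)$ at the origin one has $A=I$, $B_1=I$, $B_2=e_2\otimes e_1$, and $\mathcal E_{2/\sqrt3}=56-104/\sqrt3<0$. So no combination of these forms can work.

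\textbf{What the paper uses instead.} The missing idea is the orientation-sensitive, non-closed invariant one-forms $\theta_A(B)=\operatorname{tr}(JAB^T)$ and $\zeta_A(B)=\operatorname{tr}(AJ^TB^T)$. These are odd under $T$ because $RJR=-J$, and the paper takes $\alpha=c_1|A|^2\theta+c_2\det A\,\zeta+c_3|A|^2\zeta+c_4\det A\,\theta$. Oddness is precisely what makes $\mathcal E_\gamma+\omega$ split into the two $3\times3$ blocks you hoped for. The $SO(2)\times SO(2)$ stabilizer of $A_\rho$ ($\rho\neq\pm1$) is only $\pm(I,I)$, which acts trivially and gives no splitting.

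The coefficients are then forced in two steps.
\begin{itemize}
\item Equating the corrected $\eta_2^2$ coefficient with the $\xi_3^2$ coefficient $4\kappa(\rho)$ gives $c_3=3$, $c_4=2$, $2c_1-3c_2=8\sqrt3$.
\item At $\rho=\sqrt3$ the $\xi_1^2$ coefficient $4(\rho-\sqrt3)^2$ vanishes, so the $\xi_1\eta_2$ coefficient $8(c_1-\sqrt3)$ must vanish. This gives $c_1=\sqrt3$, $c_2=-2\sqrt3$.
\end{itemize}
Note that $\rho=\sqrt3$, together with $\rho=1/\sqrt3$ in the second block, is where the Legendre--Hadamard degeneracy sits, not near $\rho=-1$ as you suggest.

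Positivity is certified by the principal minors of $M_1(\rho)$ for all real $\rho$: $(\rho-\sqrt3)^2$, $\tfrac34(\rho-\sqrt3)^2\bigl[(\rho-\sqrt3/9)^2+\tfrac{8}{27}\bigr]$, and $\tfrac16(\rho-\sqrt3)^2(1-\rho^2)^2$. The second block follows from the first via $O=U=J$, which sends $\rho\mapsto1/\rho$.
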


\begin{proof} It is enough to check this property for $\gamma = 2/\sqrt{3}$, and we are going to reduce to this case throughout the proof.

\vspace{10pt}
 
\noindent {\bf Step 1.} The function \(f_\gamma\) is invariant under rotations on the
left and on the right.  We impose the same invariance on the correcting
form.  Since the correction must be exact, we write
\(\omega=\dd\alpha\) and seek an invariant one-form \(\alpha\).
The coefficients of \(\mathcal E_\gamma\) are homogeneous of degree two
in its first matrix argument.  Thus \(\omega\) must have homogeneous
quadratic coefficients, and \(\alpha\) must have homogeneous cubic
coefficients.

Write \(A=(\mathbf a_1\mid\mathbf a_2)\) and
\(B=(\mathbf b_1\mid\mathbf b_2)\).  For vectors in \(\R^2\), set
\(\mathbf a\times\mathbf b=a_1b_2-a_2b_1\).  Two invariant linear
one-forms are
\begin{align}
 \theta_A(B)
 &=\mathbf a_1\times\mathbf b_1
   +\mathbf a_2\times\mathbf b_2,
 \label{eq:theta-cross}\\
 \zeta_A(B)
 &=\mathbf a_1\cdot\mathbf b_2
   -\mathbf a_2\cdot\mathbf b_1.
 \label{eq:theta-dot}
\end{align}
Let
$
 J=\begin{pmatrix}0&-1\\1&0\end{pmatrix}.
$
Then
\[
 \theta_A(B)=\operatorname{tr}(JA B^T),
 \qquad
 \zeta_A(B)=\operatorname{tr}(AJ^T B^T).
\]
The one-forms in \eqref{eq:theta-cross}--\eqref{eq:theta-dot} have linear coefficients in \(A\).  To make \(\alpha\) homogeneous of degree three, we
multiply them by the quadratic scalar quantities already present in the
definition of \(f_\gamma\), namely \(|A|^2\) and \(\det A\).  We use
the four-parameter ansatz
\begin{equation}\label{eq:ansatz}
 \alpha
 =c_1|A|^2\theta
  +c_2(\det A)\zeta
  +c_3|A|^2\zeta
  +c_4(\det A)\theta,
 \qquad
 \omega=\dd\alpha,
\end{equation}
where \(c_1,c_2,c_3,c_4\in\R\).

In terms of the entries of \(A\),
\begin{align}
 \theta
 &=A_{11}\,\dd A_{21}-A_{21}\,\dd A_{11}
   +A_{12}\,\dd A_{22}-A_{22}\,\dd A_{12},
 \label{eq:theta-cross-entries}\\
 \zeta
 &=A_{11}\,\dd A_{12}-A_{12}\,\dd A_{11}
   +A_{21}\,\dd A_{22}-A_{22}\,\dd A_{21}.
 \label{eq:theta-dot-entries}
\end{align}
Every rotation commutes with \(J\), since $J$ itself is a rotation.  If \(T(A)=OAU^T\), with
\(O,U\in SO(2)\), then
\begin{align*}
 (T^*\theta)_A(B)
 &=\operatorname{tr}\bigl(JOAU^T(OBU^T)^T\bigr)
 =\operatorname{tr}(JA B^T),\\
 (T^*\zeta)_A(B)
 &=\operatorname{tr}\bigl(OAU^TJ^T(OBU^T)^T\bigr)
 =\operatorname{tr}(AJ^T B^T).
\end{align*}
The functions \(|A|^2\) and \(\det A\) are invariant as well.
Consequently, \(\alpha\) and \(\omega\) in \eqref{eq:ansatz} are
invariant under left and right multiplication.  The form \(\omega\) is exact and has homogeneous quadratic coefficients.
\vspace{10pt}

\noindent
{\bf Step 2.} For \(A,B\in\M\),
\begin{align*}
 |A+sB|^2
 &=|A|^2+2s\operatorname{tr}(AB^T)+s^2|B|^2,\\
 \det(A+sB)
 &=\det A+s\operatorname{tr}\bigl((\cof A)B^T\bigr)
   +s^2\det B.
\end{align*}
Differentiating \eqref{eq:def-f} twice gives
\begin{align}
 D^2f_\gamma(A)[B,B]
 ={}&8\operatorname{tr}(AB^T)^2
 +4\bigl(|A|^2-\gamma\det A\bigr)|B|^2
 \nonumber\\
 &-8\gamma\operatorname{tr}(AB^T)
       \operatorname{tr}\bigl((\cof A)B^T\bigr)
 -4\gamma |A|^2\det B.
 \label{eq:hessian}
\end{align}
\vspace{10pt}

\noindent
{\bf Step 3.}
 Fix a point \(x_0 \in \mathbb{R}^2\) and constant matrices \(O,U\in SO(2)\).  Define
\[
 \widetilde v(y)=O\bigl(v(x_0+U^Ty)-v(x_0)\bigr).
\]
If \(A=Dv(x_0)\) and \(B_i=\partial_iDv(x_0)\), then
\begin{align*}
 \widetilde A:=D\widetilde v(0)&=OAU^T,
 \\
 \widetilde B_i:=\partial_iD\widetilde v(0)
 &=\sum_{k=1}^2U_{ik}\,OB_kU^T.
\end{align*}
The invariance of \(f_\gamma\), the orthogonality of \(U\), and the
skwe-symmetry of \(\omega\) give
\begin{align*}
 \mathcal E_\gamma(\widetilde A;\widetilde B_1,\widetilde B_2)
 &=\mathcal E_\gamma(A;B_1,B_2),
 \\
 \omega_{\widetilde A}
 (\widetilde B_1,\widetilde B_2)
 &=\omega_A(B_1,B_2).
\end{align*}
By singular value decomposition, there exist \(O,U\in SO(2)\) such that
$OAU^T=\diag(\sigma,\tau)$, with $
 \sigma\geq|\tau|$. It is therefore enough to restrict our reasoning to $A$ having the form $ A_\rho=\diag(1,\rho)$,
with $\rho \in \mathbb{R}$.

\vspace{10pt}

\noindent
{\bf Step 4.} 
At  $x_0$, we may write
\begin{equation*}
 B_1=\begin{pmatrix}\xi_1&\xi_2\\\eta_1&\eta_2\end{pmatrix},
 \qquad
 B_2=\begin{pmatrix}\xi_2&\xi_3\\\eta_2&\eta_3\end{pmatrix}.
\end{equation*}
At \(\gamma=2/\sqrt3\), substitution in \eqref{eq:hessian} gives
\begin{equation*}
 \mathcal E_{2/\sqrt3}(A_\rho;B_1,B_2)=Q_1+Q_2,
\end{equation*}
where $Q_1$ and $Q_2$ are given by
\begin{align}
 Q_1(\rho;\xi_1,\xi_3,\eta_2)={}&
 4(\rho-\sqrt3)^2\xi_1^2
 +4\left(1+\rho^2-\frac{2\rho}{\sqrt3}\right)\xi_3^2
 \nonumber\\
 &+4\left(2+4\rho^2-\frac{8\rho}{\sqrt3}\right)\eta_2^2
 +4\bigl(4\rho-2\sqrt3(1+\rho^2)\bigr)\xi_1\eta_2
 \nonumber\\
 &+\frac{8}{\sqrt3}(1+\rho^2)\xi_3\eta_2,
 \label{eq:Q1}\\
 Q_2(\rho;\eta_3,\eta_1,\xi_2)={}&
 4\left(1+\rho^2-\frac{2\rho}{\sqrt3}\right)\eta_1^2
 +4\left(4+2\rho^2-\frac{8\rho}{\sqrt3}\right)\xi_2^2
 \nonumber\\
 &+4(1+3\rho^2-2\sqrt3\rho)\eta_3^2
 +\frac{8}{\sqrt3}(1+\rho^2)\eta_1\xi_2
 \nonumber\\
 &+4\bigl(4\rho-2\sqrt3(1+\rho^2)\bigr)\xi_2\eta_3.
 \nonumber
\end{align}
Equations \eqref{eq:theta-cross-entries} and
\eqref{eq:theta-dot-entries} give
\begin{align*}
 \dd\theta
 &=2(\dd A_{11}\wedge\dd A_{21}
      +\dd A_{12}\wedge\dd A_{22}),
 \\
 \dd\zeta
 &=2(\dd A_{11}\wedge\dd A_{12}
      +\dd A_{21}\wedge\dd A_{22}).
\end{align*}
At \(A_\rho\),
\[
\begin{array}{c|cc}
 &B_1&B_2\\ \hline
 \dd|A|^2&2(\xi_1+\rho\eta_2)&2(\xi_2+\rho\eta_3)\\
 \dd\det A&\rho\xi_1+\eta_2&\rho\xi_2+\eta_3\\
 \theta&\eta_1-\rho\xi_2&\eta_2-\rho\xi_3\\
 \zeta&\xi_2-\rho\eta_1&\xi_3-\rho\eta_2
\end{array}
\]
Using \(\dd(g\vartheta)=\dd g\wedge\vartheta
+g\,\dd\vartheta\), the correction splits as
\begin{equation*}
 \omega_{A_\rho}(B_1,B_2)
 =C_1(\rho;\xi_1,\xi_3,\eta_2)
  +C_2(\rho;\eta_3,\eta_1,\xi_2),
\end{equation*}
where
\begin{align}
 C_1={}&
 \bigl[-2\rho c_1+3\rho c_2+(4+2\rho^2)c_3-\rho^2c_4\bigr]
 \xi_1\xi_3
 \nonumber\\
 &+\bigl[(4+2\rho^2)c_1-\rho^2c_2-2\rho c_3+3\rho c_4\bigr]
 \xi_1\eta_2
 \nonumber\\
 &+\bigl[-(2+4\rho^2)c_1+c_2+2\rho c_3-3\rho c_4\bigr]
 \xi_3\eta_2
 \nonumber\\
 &+\bigl[2\rho c_1-3\rho c_2-(2+4\rho^2)c_3+c_4\bigr]
 \eta_2^2.
 \label{eq:general-C1}
\end{align}
The second block is
\begin{align*}
 C_2={}&
 \bigl[-2\rho c_1+3\rho c_2+(2+4\rho^2)c_3-c_4\bigr]
 \eta_1\eta_3
 \\
 &+\bigl[-(4+2\rho^2)c_1+\rho^2c_2+2\rho c_3-3\rho c_4\bigr]
 \eta_1\xi_2
 \\
 &+\bigl[(2+4\rho^2)c_1-c_2-2\rho c_3+3\rho c_4\bigr]
 \eta_3\xi_2
 \\
 &+\bigl[2\rho c_1-3\rho c_2-(4+2\rho^2)c_3+\rho^2c_4\bigr]
 \xi_2^2.
\end{align*}
The \(\xi_3^2\)-coefficient in \(Q_1\) is
$4\kappa(\rho)$, and $
 \kappa(\rho)=1+\rho^2-\frac{2\rho}{\sqrt3}
 =\left(\rho-\frac1{\sqrt3}\right)^2+\frac23>0$.
We impose that the corrected \(\eta_2^2\)-coefficient be equal
\(4\kappa(\rho)\) (this choice makes the eigenvectors of the $\eta_2$-$\xi_3$ minor independent of $\rho$). Matching the powers of \(\rho\) in
\eqref{eq:general-C1} gives
$c_3=3, c_4=2$, and $2c_1-3c_2=8\sqrt3$.
Therefore, the \(\xi_1\xi_3\)-coefficient in
\(Q_1+C_1\) is \(4(\rho-\sqrt3)^2\).
At
\(\rho=\sqrt3\), the \(\xi_1^2\)-coefficient vanishes.  A
nonnegative quadratic form with a zero \(\xi_1^2\)-coefficient cannot
have a nonzero mixed term involving \(\xi_1\).  The
\(\xi_1\eta_2\)-coefficient at \(\rho=\sqrt3\) is
\(8(c_1-\sqrt3)\).  Hence we set
\begin{equation*}
 c_1=\sqrt3,
 \qquad c_2=-2\sqrt3,
 \qquad c_3=3,
 \qquad c_4=2.
\end{equation*}
The corresponding primitive and two-form are
\begin{align*}
 \alpha_+
 &=(\sqrt3|A|^2+2\det A)\theta
   +(3|A|^2-2\sqrt3\det A)\zeta,
 \\
 \omega_+&=\dd\alpha_+.
\end{align*}
For these coefficients, \eqref{eq:general-C1} becomes
\begin{align}
 C_1={}&4(\rho-\sqrt3)^2\xi_1\xi_3
 +4\sqrt3(1+\rho^2)(\xi_1\eta_2-\xi_3\eta_2)-12\left(\rho-\frac1{\sqrt3}\right)^2\eta_2^2.
 \label{eq:C1}
\end{align}
Adding \eqref{eq:C1} to \eqref{eq:Q1}, we can write the corrected
block directly as
\begin{equation*}
 Q_1+C_1
 =4
 \begin{pmatrix}\xi_1&\xi_3&\eta_2\end{pmatrix}
 M_1(\rho)
 \begin{pmatrix}\xi_1\\\xi_3\\\eta_2\end{pmatrix},
\end{equation*}
where
\begin{equation*}
 M_1(\rho)=
 \begin{pmatrix}
 (\rho-\sqrt3)^2
 &\frac12(\rho-\sqrt3)^2
 &2\rho-\frac{\sqrt3}{2}(1+\rho^2)\\[2pt]
 \frac12(\rho-\sqrt3)^2
 &1+\rho^2-\frac{2\rho}{\sqrt3}
 &-\frac{1+\rho^2}{2\sqrt3}\\[2pt]
 2\rho-\frac{\sqrt3}{2}(1+\rho^2)
 &-\frac{1+\rho^2}{2\sqrt3}
 &1+\rho^2-\frac{2\rho}{\sqrt3}
 \end{pmatrix}.
\end{equation*}
The principal minors of \(M_1(\rho)\) are
\begin{align*}
 \Delta_1
 &=(\rho-\sqrt3)^2,
 \\
 \Delta_2
 &=\frac34(\rho-\sqrt3)^2
 \left[\left(\rho-\frac{\sqrt3}{9}\right)^2
       +\frac8{27}\right],
 \\
 \Delta_3
 &=\det M_1(\rho)
 =\frac16(\rho-\sqrt3)^2(1-\rho^2)^2.
\end{align*}
If \(\rho\notin\{-1,1,\sqrt3\}\), all three are strictly positive,
so \(M_1(\rho)>0\).  Since
\(M_1(\rho)\) depends continuously on \(\rho\) and the set of
positive semidefinite matrices is closed, it follows at the three
critical values as well that \(M_1(\rho)\geq0\).  Therefore
\begin{equation*}
 Q_1+C_1\geq0
 \qquad\text{for every }\rho,\xi_1,\xi_3,\eta_2\in\R.
\end{equation*}
For the second block, $Q_2+C_2$, we do not need to repeat the previous calculations. Indeed, the transformation in Step~3 with \(O=U=J\)
is
\[
 \begin{gathered}
 A_\rho\longmapsto \rho A_{1/\rho},\qquad \rho\neq0,\\
 (\xi_1,\xi_3,\eta_2)\longmapsto
   (-\eta_3,-\eta_1,-\xi_2),\\
 (\eta_3,\eta_1,\xi_2)\longmapsto
   (\xi_1,\xi_3,\eta_2).
 \end{gathered}
\]
Thus, setting \(\xi_1=\xi_3=\eta_2=0\), invariance and degree-two
homogeneity in \(A\) give
\begin{align*}
 &Q_2(\rho;\eta_3,\eta_1,\xi_2)
   +C_2(\rho;\eta_3,\eta_1,\xi_2)\\
 &\qquad=\rho^2\bigl[
 Q_1(1/\rho;\eta_3,\eta_1,\xi_2)
 +C_1(1/\rho;\eta_3,\eta_1,\xi_2)\bigr]\geq0,
 \qquad \rho\neq0.
\end{align*}
The three minus signs have disappeared because all terms are
quadratic monomials.  At \(\rho=0\) the non-negativity follows by continuity.

\end{proof}

\appendix

\section{The invariant quartic cone}\label{app:invariant-quartics}

In this appendix we briefly record how the argument for the
Dacorogna--Marcellini family extends to prove
Theorem~\ref{cor:invariant-quartics}.  Set
\[
 s(F)=|F|^2,\qquad d(F)=\det F.
\]

\begin{proof}[Proof of Theorem~\ref{cor:invariant-quartics}]
\medskip\noindent\emph{Invariant representation.}
We first show that every homogeneous quartic polynomial satisfying the
invariance assumption in Theorem~\ref{cor:invariant-quartics} has a
unique representation of the form
\begin{equation}\label{eq:app-general-quartic}
 Q(F)=a s(F)^2+b s(F)d(F)+c d(F)^2.
\end{equation}
Let
\[
 q(x,y)=Q(\diag(x,y)).
\]
Since \(Q\) is homogeneous of degree four, \(q\) is a homogeneous
quartic polynomial in \(x\) and \(y\).  Moreover, with
\[
 J=\begin{pmatrix}0&-1\\1&0\end{pmatrix},
 \qquad
 J\diag(x,y)J^T=\diag(y,x),
\]
and hence the rotational invariance of \(Q\) gives
\(q(x,y)=q(y,x)\).  Every symmetric homogeneous quartic in two
variables has the form
\[
 q(x,y)=A(x^4+y^4)+B(x^3y+xy^3)+Cx^2y^2.
\]
On diagonal matrices, \(s=x^2+y^2\) and \(d=xy\), so
\[
 x^4+y^4=s^2-2d^2,\qquad
 x^3y+xy^3=sd,\qquad x^2y^2=d^2.
\]
Thus \eqref{eq:app-general-quartic} holds on every diagonal matrix.
Every matrix \(F\in\M\) can be brought to a signed diagonal matrix by
proper rotations on the left and right.  Since both sides of
\eqref{eq:app-general-quartic} are invariant under these rotations,
the identity holds for every \(F\).  Finally, \(s^2,sd,d^2\) are
linearly independent already on diagonal matrices, which proves
uniqueness.

\medskip\noindent\emph{Rank-one convexity.}
We next determine rank-one convexity.  By the rotational invariance it
is enough to test the direction
\[
 E_{11}=\begin{pmatrix}1&0\\0&0\end{pmatrix}.
\]
For
\(A=\left(\begin{smallmatrix}x&y\\ z&w\end{smallmatrix}\right)\), direct
differentiation gives
\[
 D^2Q(A)[E_{11},E_{11}]
 =12ax^2+6bxw+(4a+2c)w^2
   +4a(y^2+z^2)-2byz.
\]
Thus this expression is nonnegative for every \(A\) if and only if
\[
 \begin{pmatrix}12a&3b\\3b&4a+2c\end{pmatrix}\geq0,
 \qquad
 \begin{pmatrix}4a&-b\\-b&4a\end{pmatrix}\geq0.
\]
Equivalently, either
\begin{equation}\label{eq:app-quartic-cone}
 a>0,\qquad |b|\leq4a,\qquad
 c\geq-2a+\frac{3b^2}{8a},
\end{equation}
or \(a=b=0\) and \(c\geq0\).  By the Legendre--Hadamard
characterization \eqref{eq:legendre-hadamard}, these conditions
describe the full rank-one convex cone.

\medskip\noindent\emph{Reduction to the boundary family.}
It remains to prove sufficiency for quasiconvexity.  Suppose first that
\(a>0\), and set
\[
 t=\frac{b}{4a},\qquad
 \mu=c+2a-\frac{3b^2}{8a}.
\]
Condition \eqref{eq:app-quartic-cone} says that \(|t|\leq1\) and
\(\mu\geq0\), and gives the decomposition
\begin{equation}\label{eq:app-cone-decomposition}
 Q=aW_t+\mu d^2,
 \qquad
 W_t=s^2+4t sd+(6t^2-2)d^2.
\end{equation}
The term \(d^2\) is polyconvex, hence quasiconvex.  We only have to
consider \(W_t\).

\medskip\noindent\emph{Quasiconvexity of the boundary family.}
We use the invariant one-forms \(\theta\) and \(\zeta\) defined in
\eqref{eq:theta-cross}--\eqref{eq:theta-dot}.  Define the cubic
one-form and its exact differential by
\begin{equation}\label{eq:app-alpha-t}
 \begin{aligned}
 \alpha_t
 &=[-3t s+6(1-2t^2)d]\theta
   +[3s+6td]\zeta,\\
 \omega_t&=\dd\alpha_t.
 \end{aligned}
\end{equation}
Both \(\alpha_t\) and \(\omega_t\) are invariant under proper rotations
on the left and right.

By the same argument used in Step~3 of the proof of
Lemma~\ref{lem:pointwise}, it suffices to take
\(A=A_\rho=\diag(1,\rho)\).  At the point under consideration,
write the derivative matrices as in the main proof:
\[
 B_1=\begin{pmatrix}\xi_1&\xi_2\\\eta_1&\eta_2\end{pmatrix},
 \qquad
 B_2=\begin{pmatrix}\xi_2&\xi_3\\\eta_2&\eta_3\end{pmatrix}.
\]
Put
\[
 L=1+t\rho,\qquad M=t+\rho,\qquad \delta=1-t^2.
\]
Differentiating \(W_t\) and \eqref{eq:app-alpha-t}, and collecting
squares, gives
\begin{align*}
 &D^2W_t(A_\rho)[B_1,B_1]+D^2W_t(A_\rho)[B_2,B_2]
   +(\omega_t)_{A_\rho}(B_1,B_2)\\
 &={}
 3(2L\xi_1+L\xi_3+M\eta_2)^2+(L\xi_3-M\eta_2)^2
   +4\delta(\rho\xi_3-\eta_2)^2\\
 &\quad
 +3(L\xi_2+M\eta_1+2M\eta_3)^2+(L\xi_2-M\eta_1)^2
   +4\delta(\rho\xi_2-\eta_1)^2.
\end{align*}
This is nonnegative because \(|t|\leq1\).  The same covariance and
homogeneity argument gives the corresponding inequality for every
\(A\in\M\).

Now let \(v(x)=Fx+\psi(x)\), with \(\psi\) in the Schwartz class.
Exactness and \eqref{eq:exact-integral-zero} imply
\[
 \int_{\R^2}(\omega_t)_{Dv}
   (\partial_1Dv,\partial_2Dv)\,\dd x=0.
\]
After integration, the pointwise inequality therefore yields
\[
 \int_{\R^2}\sum_{j=1}^2
 D^2W_t(Dv)[\partial_jDv,\partial_jDv],\dd x\geq0.
\]
The proof of Lemma~\ref{lem:heat-flow} uses only that the integrand is
a homogeneous quartic, so it applies verbatim and shows that \(W_t\)
is quasiconvex.  Hence \eqref{eq:app-cone-decomposition} is
quasiconvex.  In the remaining case \(a=b=0\), one has
\(Q=c d^2\) with \(c\geq0\), which was already covered above.

\end{proof}

\end{document}